\documentclass[11pt,letterpaper]{article}
\usepackage[margin=1.5in]{geometry}
\usepackage{amsmath,times}
\usepackage{amssymb,amsmath}
\usepackage{graphicx}
\usepackage{amsthm}
\usepackage{multicol}
\usepackage[all]{xy}
\usepackage{cancel}
\usepackage{tikz}
\usepackage[utf8]{inputenc}
\usepackage{url}
\usepackage{authblk}
\usepackage{booktabs}
\usepackage{xcolor}
\usepackage[dvipsnames]{xcolor}
\usepackage{tcolorbox}
\tcbuselibrary{theorems}
\usetikzlibrary{tikzmark,positioning,cd,arrows.meta}
\usepackage{graphicx}
\usepackage{multicol}
\usepackage{tikz}
\usepackage{doi}
\usepackage{tikz-cd}
\usepackage{orcidlink}
\usepackage{url}
\usepackage{authblk}
\usepackage{booktabs}
\usepackage{xcolor}
\usepackage[dvipsnames]{xcolor}
\usepackage{tcolorbox}
\tcbuselibrary{theorems}
\usetikzlibrary{tikzmark,positioning,cd,arrows.meta}

\title{A note on simple actions of small monoids}

\author[1]{Ram\'on H. Ruiz-Medina \orcidlink{0000-0003-2916-9160}}

\affil[1]{Centro de Ciencias Matem\'aticas.
Universidad Nacional Aut\'onoma de M\'exico.
Morelia, Michoac\'an, M\'exico. CP 58089 \\ \texttt{harath.ruiz@academicos.udg.mx}}

\date{}

\newtheorem{teorema}{Theorem}[]
\newtheorem{lema}[teorema]{Lemma}

\newcommand{\zz}{\mathbb{Z}}

\begin{document}

\maketitle

\begin{abstract}
These notes aim to exemplify the concept of action and partial action of a finite monoid $M$, identifying among the so-called \emph{indecomposable} actions those that are also \emph{simple}. We present all the simplified examples of monoids (non-groups) of order two and three, explicitly visualizing which are simple and which are not.

In addition, we prove three general lemmas relating simplicity to indecomposability in the context of partial actions, and establish a result on the relation between sub-actions of total actions and partial actions. Finally, we point out the connection of this work with the Burnside ring of monoids introduced by Weissmann and with Webb's theory of $C$-sets.

\textbf{Keywords:} Simple $M$-set, Finite monoid, Partial action, Burnside ring.

\textbf{MSC 2020:} 20M30, 18B40, 19A22.
\end{abstract}

\section{Introduction}

The study of group actions on sets is one of the pillars of modern algebra. Since Burnside's foundational work \cite{burnside1909theory}, the theory of group actions has been fundamental in representation theory, combinatorics, and topology. The Burnside ring of a finite group, which encodes the actions of the group on finite sets, was systematically developed by Dress \cite{dress1969characterisation} and Bouc \cite{Bouc}, and has been the subject of intense study \cite{spec-SCHW, raggi2005groups, irreducible}.

The observation that a group can be viewed as a category with a single object opened the door to generalizations. Webb \cite{Webb2023} introduced $C$-sets as functors from a finite category to finite sets, defining the Burnside ring of the category. In that work, Webb shows that the Burnside ring of a finite category does not always have finite rank. Recently, Calder\'on, Raggi-C\'ardenas, Rosas and Ruiz-Medina \cite{semisimple} characterized the categories whose Burnside ring has finite rank: they are exactly the groupoids. On the other hand, Weissmann \cite{weiss} extended the construction to monoids, defining the Burnside ring of a monoid and distinguishing between weak and strong orbits.

The present work lies in this line. Our goal is to classify, explicitly and computationally, the actions (total and partial) of the non-group monoids of order 2 and 3, identifying those that are simple. We adopt the categorical point of view: an action of a monoid $M$ is a functor from its delooping category to the category of sets (for total actions) or to the category of sets with partial functions (for partial actions). The classification was obtained by an exhaustive computer search. For each possible table, the fulfillment of the action axioms was automatically verified, then the isomorphism classes were identified and finally it was determined which were simple and indecomposable.

In Section 2 we establish the basic definitions. In Section 3 we classify the unique monoid of order 2. In Section 4 we classify the six monoids of order 3, presenting tables and diagrams for each action. Finally, in Section 5 we prove three general lemmas relating simplicity, indecomposability and sub-actions.

\section{Preliminaries}

Throughout this work, $M$ will denote a finite monoid with identity element $1$. An \emph{$M$-set} (total) is a set $X$ equipped with an operation $\cdot: M\times X \to X$ such that $1\cdot x = x$ and $(mn)\cdot x = m\cdot (n\cdot x)$.

A \emph{partial action} of $M$ on $X$ is a partial operation $\cdot: M\times X \rightharpoonup X$ satisfying $1\cdot x = x$ for all $x$, and the compatibility condition: $m\cdot (n\cdot x)$ is defined if and only if $(mn)\cdot x$ is defined, and in that case they coincide. Equivalently, it is a monoid morphism $M \to \operatorname{PT}(X)$.

An $M$-subset is an invariant subset under the action. An $M$-set is \emph{indecomposable} if it is not a disjoint union of two non-empty proper $M$-subsets. It is \emph{simple} if it has no non-empty proper $M$-subsets. Every simple set is indecomposable, but the converse does not always hold in the total case.

A morphism of $M$-sets is a function $f:X\to Y$ that commutes with the action: $f(m\cdot x)=m\cdot f(x)$, understanding equality in the partial sense. If it is bijective, it is an isomorphism.

Actions will be represented by \emph{action tables}, where the entry in row $m$ and column $x$ is $m\cdot x$, or $\emptyset$ if it is not defined. Two tables are isomorphic if there is a bijection between the underlying sets that commutes with the actions.

\section{The unique monoid of order 2}

Given a two-element set on which an associative binary operation with identity is defined, but which does not define a group, there is only one case. For this case we define and work with the monoid from its Cayley table.

$$\begin{array}{c|cc}
M& 1&a\\
\hline 
1&1&a\\
a&a&a
\end{array}$$

\subsection{Sets with 2 elements}

For a two-element set $X=\{x,y\}$, we define partial actions on it through what we will call \emph{action tables}. We also generate a small diagram to visualize them and determine whether they are total or partial, indecomposable and/or simple. We will only place one representative per isomorphism class of $M$-sets.\\

From their action tables, of which there are 9 possibilities (since the first row must be invariant), it was identified that 6 define an action, either total or partial. Among these, 4 isomorphism classes were located, which are presented below in the following table.

\begin{center}
\begin{tabular}{|c|ccc|c|}
\hline 
$\begin{array}{|c|cc|}\hline & x&y\\\hline 1&x&y\\a&x&y\\ \hline \end{array}$
&& $\xymatrix{x\ar_{1,a}@(ul,dl)&y\ar^{1,a}@(ur,dr)}$ && \begin{tabular}{c}Total action \\ Indecomposable: NO\\ Simple: NO  \end{tabular}\\
\hline
$\begin{array}{|c|cc|}\hline & x&y\\\hline 1&x&y\\a&y&y\\ \hline \end{array}$
&& $\xymatrix{x\ar_{1}@(ul,dl)\ar^{a}[r]&y\ar^{1,a}@(ur,dr)}$ && \begin{tabular}{c}Total action \\ Indecomposable: YES\\ Simple: NO  \end{tabular}\\
\hline
$\begin{array}{|c|cc|}\hline & x&y\\\hline 1&x&y\\a&\emptyset&\emptyset\\ \hline \end{array}$
&& $\xymatrix{x\ar_{1}@(ul,dl)&y\ar^{1}@(ur,dr)}$
&& \begin{tabular}{c}Partial action \\ Indecomposable: NO\\ Simple: NO  \end{tabular}\\
\hline
$\begin{array}{|c|cc|}\hline & x&y\\\hline 1&x&y\\a&x&\emptyset\\ \hline \end{array}$
&& $\xymatrix{x\ar_{1,a}@(ul,dl)&y\ar^{1}@(ur,dr)}$
&& \begin{tabular}{c}Partial action \\ Indecomposable: YES\\ Simple: NO  \end{tabular}\\
\hline
\end{tabular}
\end{center}

\subsection{Sets with 3 elements}
No three-element set with a partial or total action defined from the two-element monoid defined in this section is simple. Bearing in mind that identifying simple elements is the main objective of this note, this section remains partially empty, with the comment that an important result explaining this phenomenon is introduced in the last section.

\section{Monoids of order 3}

In this section we present analogous tables for the 6 monoids of order 3 that exist, which are not groups. We divide by subsections, speaking about one monoid in each.

\subsection{The monoid $\zz_{2}$ with zero}

We define the operation of our monoid through the following Cayley table.

$$\begin{array}{c|ccc}&1&-1&0 \\ \hline 1&1&-1&0\\ -1&-1&1&0\\ 0&0&0&0 \end{array}$$

For this monoid the following is identified.

For one-element sets: 2 total or partial actions, which are not isomorphic, i.e. there are 2 classes, and both are simple.

\begin{center}
\begin{tabular}{|c|ccc|c|}
\hline 
$\begin{array}{|c|c|}\hline
& x\\\hline
1&x\\-1&x\\ 0&x\\ \hline \end{array}$
&& $\xymatrix{x\ar^{1,-1,0}@(ur,dr)}$ && \begin{tabular}{c}Total action \\ Indecomposable: YES\\ Simple: YES  \end{tabular}\\
\hline
$\begin{array}{|c|c|}\hline
& x\\\hline
1&x\\-1&x\\ 0&\emptyset\\ \hline \end{array}$
&& $\xymatrix{x\ar^{1,-1}@(ur,dr)}$ && \begin{tabular}{c}Partial action \\ Indecomposable: YES\\ Simple: YES  \end{tabular}\\
\hline
\end{tabular}
\end{center}

For two-element sets: out of the 81 possible tables, 7 total or partial actions, of which 5 isomorphism classes are identified, and one is simple.

\begin{center}
\begin{tabular}{|c|ccc|c|}
\hline 
$\begin{array}{|c|cc|}\hline
& x&y\\\hline
1&x&y\\-1&x&y\\ 0&x&y\\ \hline \end{array}$
&& $\xymatrix{x\ar_{1,-1,0}@(ul,dl)&y\ar^{1,-1,0}@(ur,dr)}$ && \begin{tabular}{c}Total action \\ Indecomposable: NO\\ Simple: NO  \end{tabular}\\
\hline
$\begin{array}{|c|cc|}\hline
& x&y\\\hline
1&x&y\\-1&x&y\\ 0&y&y\\ \hline \end{array}$
&& $\xymatrix{x\ar_{1,-1}@(ul,dl)\ar^{0}[r]&y\ar^{1,-1,0}@(ur,dr)}$ && \begin{tabular}{c}Total action \\ Indecomposable: NO\\ Simple: NO  \end{tabular}\\
\hline

$\begin{array}{|c|cc|}\hline
& x&y\\\hline
1&x&y\\-1&x&y\\ 0&x&\emptyset\\ \hline \end{array}$
&& $\xymatrix{x\ar_{1,-1,0}@(ul,dl)&y\ar^{1,-1}@(ur,dr)}$ && \begin{tabular}{c}Partial action \\ Indecomposable: NO\\ Simple: NO  \end{tabular}\\
\hline

$\begin{array}{|c|cc|}\hline
& x&y\\\hline
1&x&y\\-1&x&y\\ 0&\emptyset&\emptyset\\ \hline \end{array}$
&& $\xymatrix{x\ar_{1,-1}@(ul,dl)&y\ar^{1,-1}@(ur,dr)}$ && \begin{tabular}{c}Partial action \\ Indecomposable: NO\\ Simple: NO  \end{tabular}\\
\hline

$\begin{array}{|c|cc|}\hline
& x&y\\\hline
1&x&y\\-1&y&x\\ 0&\emptyset&\emptyset\\ \hline \end{array}$
&& $\xymatrix{x\ar_{1}@(ul,dl)\ar@/^/^{-1}[r]&y\ar@/^/^{-1}[l]\ar^{1}@(ur,dr)}$ && \begin{tabular}{c}Partial action \\ Indecomposable: NO\\ Simple: YES  \end{tabular}\\
\hline
\end{tabular}
\end{center}

For actions on three-element sets: out of the 4096 possible tables, 35 total or partial actions are identified, of which 11 isomorphism classes are identified, none of which are simple.

\begin{center}
\begin{tabular}{|c|ccc|c|}
\hline 
$\begin{array}{|c|ccc|}\hline
& x&y&z\\\hline
1&x&y&z\\-1&x&y&z\\ 0&x&y&z\\ \hline \end{array}$
&& $\xymatrix{x\ar_{1,-1,0}@(ul,dl)& & y\ar^{1,-1,0}@(ur,dr) \\ & z\ar^{1,-1,0}@(ul,ur)}$ && \begin{tabular}{c}Total action \\ Indecomposable: NO\\ Simple: NO  \end{tabular}\\
\hline 
$\begin{array}{|c|ccc|}\hline
& x&y&z\\\hline
1&x&y&z\\-1&x&y&z\\ 0&\emptyset&\emptyset&\emptyset\\ \hline \end{array}$
&& $\xymatrix{x\ar_{1,-1}@(ul,dl)& & y\ar^{1,-1}@(ur,dr) \\ & z\ar^{1,-1}@(ul,ur)}$ && \begin{tabular}{c}Partial action \\ Indecomposable: NO\\ Simple: NO  \end{tabular}\\
\hline 
$\begin{array}{|c|ccc|}\hline
& x&y&z\\\hline
1&x&y&z\\-1&x&y&z\\ 0&x&\emptyset&\emptyset\\ \hline \end{array}$
&& $\xymatrix{x\ar_{1,-1,0}@(ul,dl)& & y\ar^{1,-1}@(ur,dr) \\ & z\ar^{1,-1}@(ul,ur)}$ && \begin{tabular}{c}Partial action \\ Indecomposable: NO\\ Simple: NO  \end{tabular}\\
\hline 
$\begin{array}{|c|ccc|}\hline
& x&y&z\\\hline
1&x&y&z\\-1&x&y&z\\ 0&y&y&\emptyset\\ \hline \end{array}$
&& $\xymatrix{x\ar_{1,-1}@(ul,dl) \ar^{0}[rr] & & y\ar^{1,-1,0}@(ur,dr) \\ & z\ar_{1,-1}@(dl,dr)}$ && \begin{tabular}{c}Partial action \\ Indecomposable: NO\\ Simple: NO  \end{tabular}\\
\hline 
$\begin{array}{|c|ccc|}\hline
& x&y&z\\\hline
1&x&y&z\\-1&x&y&z\\ 0&x&y&\emptyset\\ \hline \end{array}$
&& $\xymatrix{x\ar_{1,-1,0}@(ul,dl)& & y\ar^{1,-1,0}@(ur,dr) \\ & z\ar^{1,-1}@(ul,ur)}$ && \begin{tabular}{c}Partial action \\ Indecomposable: NO\\ Simple: NO  \end{tabular}\\
\hline 
$\begin{array}{|c|ccc|}\hline
& x&y&z\\\hline
1&x&y&z\\-1&x&y&z\\ 0&z&z&z\\ \hline \end{array}$
&& $\xymatrix{x\ar_{1,-1}@(ul,dl)\ar_{0}[dr]& & y\ar^{1,-1}@(ur,dr) \ar^{0}[dl] \\ & z\ar_{1,-1,0}@(dl,dr) }$ && \begin{tabular}{c}Total action \\ Indecomposable: NO\\ Simple: NO  \end{tabular}\\
\hline 
\end{tabular}
\end{center}

\begin{center}
\begin{tabular}{|c|ccc|c|}
\hline 
$\begin{array}{|c|ccc|}\hline
& x&y&z\\\hline
1&x&y&z\\-1&x&y&z\\ 0&x&y&x\\ \hline \end{array}$
&& $\xymatrix{x\ar_{1,-1,0}@(ul,dl)& & y\ar^{1,-1,0}@(ur,dr) \\ & z\ar^{1,-1}@(ul,ur) \ar^{0}[ul]}$ && \begin{tabular}{c}Total action \\ Indecomposable: NO\\ Simple: NO  \end{tabular}\\
\hline 

$\begin{array}{|c|ccc|}\hline
& x&y&z\\\hline
1&x&y&z\\-1&y&x&z\\ 0&\emptyset&\emptyset&\emptyset\\ \hline \end{array}$
&& $\xymatrix{x\ar_{1}@(ul,dl) \ar@/^/^{-1}[rr] & & y\ar^{1}@(ur,dr) \ar@/^/^{-1}[ll] \\ & z\ar_{1,-1}@(dl,dr) &}$ && \begin{tabular}{c}Partial action \\ Indecomposable: NO\\ Simple: NO  \end{tabular}\\
\hline 
$\begin{array}{|c|ccc|}\hline
& x&y&z\\\hline
1&x&y&z\\-1&y&x&z\\ 0&\emptyset&\emptyset&z\\ \hline \end{array}$
&& $\xymatrix{x\ar_{1}@(ul,dl) \ar@/^/^{-1}[rr] & & y\ar^{1}@(ur,dr) \ar@/^/^{-1}[ll] \\ & z\ar_{1,-1,0}@(dl,dr) &}$ && \begin{tabular}{c}Partial action \\ Indecomposable: NO\\ Simple: NO  \end{tabular}\\
\hline 
$\begin{array}{|c|ccc|}\hline
& x&y&z\\\hline
1&x&y&z\\-1&y&x&z\\ 0&z&z&\emptyset\\ \hline \end{array}$
&& $\xymatrix{x \ar_{1}@(ul,dl) \ar@/^/^{-1}[rr] \ar_{0}[dr] & & 
  y \ar^{1}@(ur,dr) \ar@/^/^{-1}[ll] \ar^{0}[ld] \\
  & z \ar_{1,-1}@(dl,dr) &}$ && \begin{tabular}{c}Partial action \\ Indecomposable: NO\\ Simple: NO  \end{tabular}\\
\hline 
$\begin{array}{|c|ccc|}\hline
& x&y&z\\\hline
1&x&y&z\\-1&y&x&z\\ 0&z&z&z\\ \hline \end{array}$
&& $\xymatrix{x\ar_{1}@(ul,dl)\ar@/^/^{-1}[rr]\ar_{0}[dr]& & y\ar^{1}@(ur,dr)\ar@/^/^{-1}[ll]\ar^{0}[ld] \\ & z\ar_{1,-1,0}@(dl,dr)}$ && \begin{tabular}{c}Total action \\ Indecomposable: NO\\ Simple: NO  \end{tabular}\\
\hline

\end{tabular}
\end{center}

\subsection{The monoid with an idempotent and zero}

We define the operation of our monoid through the following Cayley table.

$$\begin{array}{c|ccc}&1&a&0 \\ \hline 1&1&a&0\\ a&a&a&0\\ 0&0&0&0 \end{array}$$

For this monoid the following is identified.

For one-element sets: 3 total or partial actions, which are not isomorphic, i.e. there are 3 classes, and all are simple.

\begin{center}
\begin{tabular}{|c|ccc|c|}
\hline 
$\begin{array}{|c|c|}\hline
& x\\\hline
1&x\\a&x\\ 0&x\\ \hline \end{array}$
&& $\xymatrix{x\ar^{1,a,0}@(ur,dr)}$ && \begin{tabular}{c}Total action \\ Indecomposable: YES\\ Simple: YES  \end{tabular}\\
\hline
$\begin{array}{|c|c|}\hline
& x\\\hline
1&x\\a&x\\ 0&\emptyset\\ \hline \end{array}$
&& $\xymatrix{x\ar^{1,a}@(ur,dr)}$ && \begin{tabular}{c}Partial action \\ Indecomposable: YES\\ Simple: YES  \end{tabular}\\
\hline
$\begin{array}{|c|c|}\hline
& x\\\hline
1&x\\a&\emptyset\\ 0&\emptyset\\ \hline \end{array}$
&& $\xymatrix{x\ar^{1}@(ur,dr)}$ && \begin{tabular}{c}Partial action \\ Indecomposable: YES\\ Simple: YES  \end{tabular}\\
\hline
\end{tabular}
\end{center}

For two-element sets: out of the 81 possible tables, 23 total or partial actions, of which 13 isomorphism classes are identified, and none are simple.

\begin{center}
\begin{tabular}{|c|ccc|c|}
\hline 
$\begin{array}{|c|cc|}\hline
& x&y\\\hline
1&x&y\\a&x&y\\ 0&x&y\\ \hline \end{array}$
&& $\xymatrix{x\ar_{1,a,0}@(ul,dl)&y\ar^{1,a,0}@(ur,dr)}$ && \begin{tabular}{c}Total action \\ Indecomposable: NO\\ Simple: NO  \end{tabular}\\
\hline
$\begin{array}{|c|cc|}\hline
& x&y\\\hline
1&x&y\\a&x&x\\ 0&x&x\\ \hline \end{array}$
&& $\xymatrix{x\ar_{1,a,0}@(ul,dl)&y\ar^{1}@(ur,dr)\ar_{a,0}[l]}$ && \begin{tabular}{c}Total action \\ Indecomposable: NO\\ Simple: NO  \end{tabular}\\
\hline
$\begin{array}{|c|cc|}\hline
& x&y\\\hline
1&x&y\\a&x&y\\ 0&x&x\\ \hline \end{array}$
&& $\xymatrix{x\ar_{1,a,0}@(ul,dl)&y\ar^{1,a}@(ur,dr)\ar_{0}[l]}$ && \begin{tabular}{c}Total action \\ Indecomposable: NO\\ Simple: NO  \end{tabular}\\
\hline
$\begin{array}{|c|cc|}\hline
& x&y\\\hline
1&x&y\\a&x&x\\ 0&x&y\\ \hline \end{array}$
&& $\xymatrix{x\ar_{1,a,0}@(ul,dl)&y\ar^{1,0}@(ur,dr)\ar_{a}[l]}$ && \begin{tabular}{c}Total action \\ Indecomposable: NO\\ Simple: NO  \end{tabular}\\
\hline
$\begin{array}{|c|cc|}\hline
& x&y\\\hline
1&x&y\\a&x&y\\ 0&x&\emptyset\\ \hline \end{array}$
&& $\xymatrix{x\ar_{1,a,0}@(ul,dl)&y\ar^{1,a}@(ur,dr)}$ && \begin{tabular}{c}Partial action \\ Indecomposable: NO\\ Simple: NO  \end{tabular}\\
\hline
\end{tabular}
\end{center}

\begin{center}
\begin{tabular}{|c|ccc|c|}
\hline 

$\begin{array}{|c|cc|}\hline
& x&y\\\hline
1&x&y\\a&x&x\\ 0&x&\emptyset\\ \hline \end{array}$
&& $\xymatrix{x\ar_{1,a,0}@(ul,dl)&y\ar^{1}@(ur,dr)\ar_{a}[l]}$ && \begin{tabular}{c}Partial action \\ Indecomposable: NO\\ Simple: NO  \end{tabular}\\
\hline
$\begin{array}{|c|cc|}\hline
& x&y\\\hline
1&x&y\\a&x&x\\ 0&x&x\\ \hline \end{array}$
&& $\xymatrix{x\ar_{1,a,0}@(ul,dl)&y\ar^{1}@(ur,dr)\ar_{a,0}[l]}$ && \begin{tabular}{c}Partial action \\ Indecomposable: NO\\ Simple: NO  \end{tabular}\\
\hline
$\begin{array}{|c|cc|}\hline
& x&y\\\hline
1&x&y\\a&x&x\\ 0&x&\emptyset\\ \hline \end{array}$
&& $\xymatrix{x\ar_{1,a,0}@(ul,dl)&y\ar^{1}@(ur,dr)\ar_{a}[l]}$ && \begin{tabular}{c}Partial action \\ Indecomposable: NO\\ Simple: NO  \end{tabular}\\
\hline
$\begin{array}{|c|cc|}\hline
& x&y\\\hline
1&x&y\\a&x&\emptyset\\ 0&x&y\\ \hline \end{array}$
&& $\xymatrix{x\ar_{1,a,0}@(ul,dl)&y\ar^{1,0}@(ur,dr)}$ && \begin{tabular}{c}Partial action \\ Indecomposable: NO\\ Simple: NO  \end{tabular}\\
\hline
$\begin{array}{|c|cc|}\hline
& x&y\\\hline
1&x&y\\a&x&\emptyset\\ 0&x&x\\ \hline \end{array}$
&& $\xymatrix{x\ar_{1,a,0}@(ul,dl)&y\ar^{1}@(ur,dr)\ar_{0}[l]}$ && \begin{tabular}{c}Partial action \\ Indecomposable: NO\\ Simple: NO  \end{tabular}\\
\hline
$\begin{array}{|c|cc|}\hline
& x&y\\\hline
1&x&y\\a&x&\emptyset\\ 0&x&\emptyset\\ \hline \end{array}$
&& $\xymatrix{x\ar_{1,a,0}@(ul,dl)&y\ar^{1}@(ur,dr)}$ && \begin{tabular}{c}Partial action \\ Indecomposable: NO\\ Simple: NO  \end{tabular}\\
\hline
$\begin{array}{|c|cc|}\hline
& x&y\\\hline
1&x&y\\a&x&\emptyset\\ 0&\emptyset&\emptyset\\ \hline \end{array}$
&& $\xymatrix{x\ar_{1,a}@(ul,dl)&y\ar^{1}@(ur,dr)}$ && \begin{tabular}{c}Partial action \\ Indecomposable: NO\\ Simple: NO  \end{tabular}\\
\hline
$\begin{array}{|c|cc|}\hline
& x&y\\\hline
1&x&y\\a&\emptyset&\emptyset\\ 0&\emptyset&\emptyset\\ \hline \end{array}$
&& $\xymatrix{x\ar_{1}@(ul,dl)&y\ar^{1}@(ur,dr)}$ && \begin{tabular}{c}Partial action \\ Indecomposable: NO\\ Simple: NO  \end{tabular}\\
\hline
\end{tabular}
\end{center}

For actions on three-element sets: out of the 4096 possible tables, 57 total or partial actions are identified, of which 14 isomorphism classes are identified, none of which are simple.

\begin{center}
\begin{tabular}{|c|ccc|c|}
\hline 
$\begin{array}{|c|ccc|}\hline
& x&y&z\\\hline
1&x&y&z\\a&x&y&z\\ 0&x&y&z\\ \hline \end{array}$
&& $\xymatrix{x\ar_{1,a,0}@(ul,dl)& & y\ar^{1,a,0}@(ur,dr) \\ & z\ar^{1,a,0}@(ul,ur)}$ && \begin{tabular}{c}Total action \\ Indecomposable: NO\\ Simple: NO  \end{tabular}\\
\hline\end{tabular}
\end{center}

\begin{center}
\begin{tabular}{|c|ccc|c|}
\hline 

$\begin{array}{|c|ccc|}\hline
& x&y&z\\\hline
1&x&y&z\\a&x&x&x\\ 0&x&x&x\\ \hline \end{array}$
&& $\xymatrix{x\ar_{1,a,0}@(ul,dl)& & y\ar^{1}@(ur,dr)\ar_{a,0}[ll] \\ & z\ar^{1}@(ul,ur)\ar^{a,0}[lu]}$ && \begin{tabular}{c}Total action \\ Indecomposable: NO\\ Simple: NO  \end{tabular}\\
\hline
$\begin{array}{|c|ccc|}\hline
& x&y&z\\\hline
1&x&y&z\\a&x&x&x\\ 0&x&y&z\\ \hline \end{array}$
&& $\xymatrix{x\ar_{1,a,0}@(ul,dl)& & y\ar^{1,0}@(ur,dr)\ar_{a}[ll] \\ & z\ar^{1,0}@(ul,ur)\ar^{a}[lu]}$ && \begin{tabular}{c}Total action \\ Indecomposable: NO\\ Simple: NO  \end{tabular}\\
\hline
$\begin{array}{|c|ccc|}\hline
& x&y&z\\\hline
1&x&y&z\\a&x&y&z\\ 0&x&x&x\\ \hline \end{array}$
&& $\xymatrix{x\ar_{1,a,0}@(ul,dl)& & y\ar^{1,a}@(ur,dr)\ar_{0}[ll] \\ & z\ar^{1,a}@(ul,ur)\ar^{0}[lu]}$ && \begin{tabular}{c}Total action \\ Indecomposable: NO\\ Simple: NO  \end{tabular}\\
\hline
$\begin{array}{|c|ccc|}\hline
& x&y&z\\\hline
1&x&y&z\\a&x&y&z\\ 0&x&y&x\\ \hline \end{array}$
&& $\xymatrix{x\ar_{1,a,0}@(ul,dl)& & y\ar^{1,a,0}@(ur,dr) \\ & z\ar^{1,a}@(ul,ur)\ar^{0}[ul]}$ && \begin{tabular}{c}Total action \\ Indecomposable: NO\\ Simple: NO  \end{tabular}\\
\hline
$\begin{array}{|c|ccc|}\hline
& x&y&z\\\hline
1&x&y&z\\a&x&y&x\\ 0&x&y&x\\ \hline \end{array}$
&& $\xymatrix{x\ar_{1,a,0}@(ul,dl)& & y\ar^{1,a,0}@(ur,dr) \\ & z\ar^{1}@(ul,ur)\ar^{a,0}[ul]}$ && \begin{tabular}{c}Total action \\ Indecomposable: NO\\ Simple: NO  \end{tabular}\\
\hline
$\begin{array}{|c|ccc|}\hline
& x&y&z\\\hline
1&x&y&z\\a&x&y&z\\ 0&\emptyset&\emptyset&\emptyset\\ \hline \end{array}$
&& $\xymatrix{x\ar_{1,a}@(ul,dl)& & y\ar^{1,a}@(ur,dr) \\ & z\ar^{1,a}@(ul,ur)}$ && \begin{tabular}{c}Partial action \\ Indecomposable: NO\\ Simple: NO  \end{tabular}\\
\hline
$\begin{array}{|c|ccc|}\hline
& x&y&z\\\hline
1&x&y&z\\a&x&y&z\\ 0&x&\emptyset&\emptyset\\ \hline \end{array}$
&& $\xymatrix{x\ar_{1,a,0}@(ul,dl)& & y\ar^{1,a}@(ur,dr) \\ & z\ar^{1,a}@(ul,ur)}$ && \begin{tabular}{c}Partial action \\ Indecomposable: NO\\ Simple: NO  \end{tabular}\\
\hline

$\begin{array}{|c|ccc|}\hline
& x&y&z\\\hline
1&x&y&z\\a&x&y&z\\ 0&x&x&\emptyset\\ \hline \end{array}$
&& $\xymatrix{x\ar_{1,a,0}@(ul,dl)& & y\ar^{1,a}@(ur,dr)\ar_{0}[ll] \\ & z\ar^{1,a}@(ul,ur)}$ && \begin{tabular}{c}Partial action \\ Indecomposable: NO\\ Simple: NO  \end{tabular}\\
\hline
\end{tabular}
\end{center}

\begin{center}
\begin{tabular}{|c|ccc|c|}
\hline 

$\begin{array}{|c|ccc|}\hline
& x&y&z\\\hline
1&x&y&z\\a&x&y&z\\ 0&x&y&\emptyset\\ \hline \end{array}$
&& $\xymatrix{x\ar_{1,a,0}@(ul,dl)& & y\ar^{1,a,0}@(ur,dr) \\ & z\ar^{1,a}@(ul,ur)}$ && \begin{tabular}{c}Partial action \\ Indecomposable: NO\\ Simple: NO  \end{tabular}\\
\hline
$\begin{array}{|c|ccc|}\hline
& x&y&z\\\hline
1&x&y&z\\a&x&x&x\\ 0&x&y&\emptyset\\ \hline \end{array}$
&& $\xymatrix{x\ar_{1,a,0}@(ul,dl)& & y\ar^{1,0}@(ur,dr)\ar_{a}[ll] \\ & z\ar^{1}@(ul,ur)}$ && \begin{tabular}{c}Partial action \\ Indecomposable: NO\\ Simple: NO  \end{tabular}\\
\hline
$\begin{array}{|c|ccc|}\hline
& x&y&z\\\hline
1&x&y&z\\a&x&y&x\\ 0&x&y&\emptyset\\ \hline \end{array}$
&& $\xymatrix{x\ar_{1,a,0}@(ul,dl)& & y\ar^{1,a,0}@(ur,dr) \\ & z\ar^{1}@(ul,ur)\ar^{a}[ul]}$ && \begin{tabular}{c}Partial action \\ Indecomposable: NO\\ Simple: NO  \end{tabular}\\
\hline
$\begin{array}{|c|ccc|}\hline
& x&y&z\\\hline
1&x&y&z\\a&x&x&x\\ 0&x&x&\emptyset\\ \hline \end{array}$
&& $\xymatrix{x\ar_{1,a,0}@(ul,dl)& & y\ar^{1}@(ur,dr)\ar_{a,0}[ll] \\ & z\ar^{1}@(ul,ur)}$ && \begin{tabular}{c}Partial action \\ Indecomposable: NO\\ Simple: NO  \end{tabular}\\
\hline
$\begin{array}{|c|ccc|}\hline
& x&y&z\\\hline
1&x&y&z\\a&x&\emptyset&\emptyset\\ 0&x&\emptyset&\emptyset\\ \hline \end{array}$
&& $\xymatrix{x\ar_{1,a,0}@(ul,dl)& & y\ar^{1}@(ur,dr) \\ & z\ar^{1}@(ul,ur)}$ && \begin{tabular}{c}Partial action \\ Indecomposable: NO\\ Simple: NO  \end{tabular}\\
\hline
\end{tabular}
\end{center}

\subsection{Monoid $M_3$: $a^2=a,\ b^2=b,\ ab=a,\ ba=b$}

We define the operation of our monoid through the following Cayley table.

$$\begin{array}{c|ccc}&1&a&b \\ \hline 1&1&a&b\\ a&a&a&a\\ b&b&b&b \end{array}$$

For one-element sets: 2 actions, 2 classes, both simple.

\begin{center}
\begin{tabular}{|c|ccc|c|}
\hline 
$\begin{array}{|c|c|}\hline
& x\\\hline
1&x\\a&x\\ b&x\\ \hline \end{array}$
&& $\xymatrix{x\ar^{1,a,b}@(ur,dr)}$ && \begin{tabular}{c}Total action \\ Indecomposable: YES\\ Simple: YES  \end{tabular}\\
\hline\end{tabular}
\end{center}

\begin{center}
\begin{tabular}{|c|ccc|c|}
\hline 

$\begin{array}{|c|c|}\hline
& x\\\hline
1&x\\a&\emptyset\\ b&\emptyset\\ \hline \end{array}$
&& $\xymatrix{x\ar^{1}@(ur,dr)}$ && \begin{tabular}{c}Partial action \\ Indecomposable: YES\\ Simple: YES  \end{tabular}\\
\hline
\end{tabular}
\end{center}

For two-element sets: out of 81 tables, 5 actions, 3 classes, 1 simple.

\begin{center}
\begin{tabular}{|c|ccc|c|}
\hline 
$\begin{array}{|c|cc|}\hline
& x&y\\\hline
1&x&y\\a&x&y\\ b&x&y\\ \hline \end{array}$
&& $\xymatrix{x\ar_{1,a,b}@(ul,dl)&y\ar^{1,a,b}@(ur,dr)}$ && \begin{tabular}{c}Total action \\ Indecomposable: NO\\ Simple: NO  \end{tabular}\\
\hline
$\begin{array}{|c|cc|}\hline
& x&y\\\hline
1&x&y\\a&x&x\\ b&x&x\\ \hline \end{array}$
&& $\xymatrix{x\ar_{1,a,b}@(ul,dl)&y\ar^{1}@(ur,dr)\ar_{a,b}[l]}$ && \begin{tabular}{c}Total action \\ Indecomposable: NO\\ Simple: NO  \end{tabular}\\
\hline
$\begin{array}{|c|cc|}\hline
& x&y\\\hline
1&x&y\\a&y&y\\ b&x&x\\ \hline \end{array}$
&& $\xymatrix{x\ar_{1,b}@(ul,dl)\ar@/^/^{a}[r]&y\ar^{1,a}@(ur,dr)\ar^{b}@/^/[l]}$ && \begin{tabular}{c}Total action \\ Indecomposable: YES\\ Simple: YES  \end{tabular}\\
\hline
$\begin{array}{|c|cc|}\hline
& x&y\\\hline
1&x&y\\a&\emptyset&\emptyset\\ b&\emptyset&\emptyset\\ \hline \end{array}$
&& $\xymatrix{x\ar_{1}@(ul,dl)&y\ar^{1}@(ur,dr)}$ && \begin{tabular}{c}Partial action \\ Indecomposable: NO\\ Simple: NO  \end{tabular}\\
\hline
$\begin{array}{|c|cc|}\hline
& x&y\\\hline
1&x&y\\a&x&\emptyset\\ b&x&\emptyset\\ \hline \end{array}$
&& $\xymatrix{x\ar_{1,a,b}@(ul,dl)&y\ar^{1}@(ur,dr)}$ && \begin{tabular}{c}Partial action \\ Indecomposable: NO\\ Simple: NO  \end{tabular}\\
\hline
\end{tabular}
\end{center}

For three-element sets: out of the 4096 possible tables, 41 actions are identified (22 total and 19 non-total partial), which are grouped into 10 isomorphism classes (5 total and 5 partial), and none of them is simple.
\begin{center}
\begin{tabular}{|c|ccc|c|}
\hline 
$\begin{array}{|c|ccc|}\hline
& x&y&z\\\hline
1&x&y&z\\a&x&y&z\\ b&x&y&z\\ \hline \end{array}$
&& $\xymatrix{x\ar_{1,a,b}@(ul,dl)& & y\ar^{1,a,b}@(ur,dr) \\ & z\ar^{1,a,b}@(ul,ur)}$ && \begin{tabular}{c}Total action \\ Indecomposable: NO\\ Simple: NO  \end{tabular}\\
\hline
$\begin{array}{|c|ccc|}\hline
& x&y&z\\\hline
1&x&y&z\\a&x&x&x\\ b&x&x&x\\ \hline \end{array}$
&& $\xymatrix{x\ar_{1,a,b}@(ul,dl)& & y\ar^{1}@(ur,dr)\ar_{a,b}[ll] \\ & z\ar^{1}@(ul,ur)\ar^{a,b}[lu]}$ && \begin{tabular}{c}Total action \\ Indecomposable: NO\\ Simple: NO  \end{tabular}\\
\hline
\end{tabular}
\end{center}

\begin{center}
\begin{tabular}{|c|ccc|c|}
\hline 
$\begin{array}{|c|ccc|}\hline
& x&y&z\\\hline
1&x&y&z\\a&x&x&x\\ b&y&y&y\\ \hline \end{array}$
&& $\xymatrix{x\ar_{1,a}@(ul,dl)\ar@/^/^{b}[rr]& & y\ar^{1,b}@(ur,dr)\ar@/^/^{a}[ll] \\ & z\ar_{1}@(dl,dr) \ar^{a}[lu] \ar_{b}[ru]}$ && \begin{tabular}{c}Total action \\ Indecomposable: NO\\ Simple: NO  \end{tabular}\\
\hline
$\begin{array}{|c|ccc|}\hline
& x&y&z\\\hline
1&x&y&z\\a&x&y&x\\ b&x&y&x\\ \hline \end{array}$
&& $\xymatrix{x\ar_{1,a,b}@(ul,dl)& & y\ar^{1,a,b}@(ur,dr) \\ & z\ar^{1}@(ul,ur)\ar^{a,b}[ul]}$ && \begin{tabular}{c}Total action \\ Indecomposable: NO\\ Simple: NO  \end{tabular}\\
\hline
$\begin{array}{|c|ccc|}\hline
& x&y&z\\\hline
1&x&y&z\\a&y&x&z\\ b&x&y&z\\ \hline \end{array}$
&& $\xymatrix{x\ar_{1,a}@(ul,dl)\ar^{b}@/^/[rr]& & y\ar^{1,b}@(ur,dr)\ar@/^/^{a}[ll] \\ & z\ar_{1,a,b}@(dl,dr)}$ && \begin{tabular}{c}Total action \\ Indecomposable: NO\\ Simple: NO  \end{tabular}\\
\hline 
$\begin{array}{|c|ccc|}\hline
& x&y&z\\\hline
1&x&y&z\\a&\emptyset&\emptyset&\emptyset\\ b&\emptyset&\emptyset&\emptyset\\ \hline \end{array}$
&& \xymatrix{x\ar_{1}@(ul,dl)& & y\ar^{1}@(ur,dr) \\ & z\ar^{1}@(ul,ur)} && \begin{tabular}{c}Partial action \\ Indecomposable: NO\\ Simple: NO  \end{tabular}\\
\hline 
$\begin{array}{|c|ccc|}\hline
& x&y&z\\\hline
1&x&y&z\\a&x&\emptyset&\emptyset\\ b&x&\emptyset&\emptyset\\ \hline \end{array}$
&& \xymatrix{x\ar_{1,a,b}@(ul,dl)& & y\ar^{1}@(ur,dr) \\ & z\ar^{1}@(ul,ur)} && \begin{tabular}{c}Partial action \\ Indecomposable: NO\\ Simple: NO  \end{tabular}\\
\hline 
$\begin{array}{|c|ccc|}\hline
& x&y&z\\\hline
1&x&y&z\\a&x&y&\emptyset\\ b&x&y&\emptyset\\ \hline \end{array}$
&& \xymatrix{x\ar_{1,a,b}@(ul,dl)& & y\ar^{1,a,b}@(ur,dr) \\ & z\ar^{1}@(ul,ur)} && \begin{tabular}{c}Partial action \\ Indecomposable: NO\\ Simple: NO  \end{tabular}\\
\hline 
$\begin{array}{|c|ccc|}\hline
& x&y&z\\\hline
1&x&y&z\\a&x&x&\emptyset\\ b&x&x&\emptyset\\ \hline \end{array}$
&& \xymatrix{x\ar_{1,a,b}@(ul,dl)& & y\ar^{1}@(ur,dr)\ar_{a,b}[ll] \\ & z\ar^{1}@(ul,ur)} && \begin{tabular}{c}Partial action \\ Indecomposable: NO\\ Simple: NO  \end{tabular}\\
\hline 
\end{tabular}
\end{center}

\begin{center}
\begin{tabular}{|c|ccc|c|}
\hline 
$\begin{array}{|c|ccc|}\hline
& x&y&z\\\hline
1&x&y&z\\a&y&y&\emptyset\\ b&x&x&\emptyset\\ \hline \end{array}$
&& \xymatrix{x\ar_{1,b}@(ul,dl)\ar^{a}[rr]& & y\ar^{1,a}@(ur,dr)\ar_{b}[ll] \\ & z\ar^{1}@(ul,ur)} && \begin{tabular}{c}Partial action \\ Indecomposable: NO\\ Simple: NO  \end{tabular}\\
\hline
\end{tabular}
\end{center}

\subsection{Monoid $M_4$: $a^2=a,\ b^2=b,\ ab=b,\ ba=a$}

Cayley table:
$$\begin{array}{c|ccc}&1&a&b \\ \hline 1&1&a&b\\ a&a&a&b\\ b&b&a&b \end{array}$$

For $|X|=1$: 2 classes, both simple.

\begin{center}
\begin{tabular}{|c|ccc|c|}
\hline 
$\begin{array}{|c|c|}\hline
& x\\\hline
1&x\\a&x\\ b&x\\ \hline \end{array}$
&& $\xymatrix{x\ar^{1,a,b}@(ur,dr)}$ && \begin{tabular}{c}Total action \\ Simple: YES  \end{tabular}\\
\hline
$\begin{array}{|c|c|}\hline
& x\\\hline
1&x\\a&\emptyset\\ b&\emptyset\\ \hline \end{array}$
&& $\xymatrix{x\ar^{1}@(ur,dr)}$ && \begin{tabular}{c}Partial action \\ Simple: YES  \end{tabular}\\
\hline
\end{tabular}
\end{center}

For $|X|=2$: out of 81 tables, 4 actions, 3 classes, none simple.

\begin{center}
\begin{tabular}{|c|ccc|c|}
\hline 
$\begin{array}{|c|cc|}\hline
& x&y\\\hline
1&x&y\\a&x&y\\ b&x&y\\ \hline \end{array}$
&& $\xymatrix{x\ar_{1,a,b}@(ul,dl)&y\ar^{1,a,b}@(ur,dr)}$ && \begin{tabular}{c}Total action \\ Not simple  \end{tabular}\\
\hline
$\begin{array}{|c|cc|}\hline
& x&y\\\hline
1&x&y\\a&x&x\\ b&x&x\\ \hline \end{array}$
&& $\xymatrix{x\ar_{1,a,b}@(ul,dl)&y\ar^{1}@(ur,dr)\ar_{a,b}[l]}$ && \begin{tabular}{c}Total action \\ Not simple  \end{tabular}\\
\hline
$\begin{array}{|c|cc|}\hline
& x&y\\\hline
1&x&y\\a&x&x\\ b&y&y\\ \hline \end{array}$
&& $\xymatrix{x\ar_{1,a}@(ul,dl)\ar@/^/^{b}[r]&y\ar^{1,b}@(ur,dr) \ar@/^/^{a}[l] }$ && \begin{tabular}{c}Total action \\ Not simple  \end{tabular}\\
\hline
$\begin{array}{|c|cc|}\hline
& x&y\\\hline
1&x&y\\a&\emptyset&\emptyset\\ b&\emptyset&\emptyset\\ \hline \end{array}$
&& $\xymatrix{x\ar_{1}@(ul,dl)&y\ar^{1}@(ur,dr)}$ && \begin{tabular}{c}Partial action \\ Not simple  \end{tabular}\\
\hline
\end{tabular}
\end{center}

For $|X|=3$: out of 4096 tables, 16 actions, 5 classes, none simple.

\begin{center}
\begin{tabular}{|c|ccc|c|}
\hline 
$\begin{array}{|c|ccc|}\hline
& x&y&z\\\hline
1&x&y&z\\a&x&y&z\\ b&x&y&z\\ \hline \end{array}$
&& $\xymatrix{x\ar_{1,a,b}@(ul,dl)& & y\ar^{1,a,b}@(ur,dr) \\ & z\ar^{1,a,b}@(ul,ur)}$ && \begin{tabular}{c}Total action \\ Not simple  \end{tabular}\\
\hline
$\begin{array}{|c|ccc|}\hline
& x&y&z\\\hline
1&x&y&z\\a&x&x&x\\ b&x&x&x\\ \hline \end{array}$
&& $\xymatrix{x\ar_{1,a,b}@(ul,dl)& & y\ar^{1}@(ur,dr)\ar_{a,b}[ll] \\ & z\ar_{1}@(dl,dr)\ar^{a,b}[lu]}$ && \begin{tabular}{c}Total action \\ Not simple  \end{tabular}\\
\hline
$\begin{array}{|c|ccc|}\hline
& x&y&z\\\hline
1&x&y&z\\a&x&x&z\\ b&x&y&z\\ \hline \end{array}$
&& $\xymatrix{x\ar_{1,a,b}@(ul,dl)& & y\ar^{1,b}@(ur,dr) \ar_{a}[ll] \\ & z\ar^{1,a,b}@(ul,ur)}$ && \begin{tabular}{c}Total action \\ Not simple  \end{tabular}\\
\hline
$\begin{array}{|c|ccc|}\hline
& x&y&z\\\hline
1&x&y&z\\a&x&x&x\\ b&y&y&x\\ \hline \end{array}$
&& $\xymatrix{x\ar_{1,a}@(ul,dl)& & y\ar^{1,b}@(ur,dr) \ar_{a}[ll] \\ & z\ar_{1}@(dl,dr) \ar^{a,b}[lu]}$ && \begin{tabular}{c}Total action \\ Not simple  \end{tabular}\\
\hline
$\begin{array}{|c|ccc|}\hline
& x&y&z\\\hline
1&x&y&z\\a&\emptyset&\emptyset&\emptyset\\ b&\emptyset&\emptyset&\emptyset\\ \hline \end{array}$
&& $\xymatrix{x\ar_{1}@(ul,dl)& & y\ar^{1}@(ur,dr) \\ & z\ar^{1}@(ul,ur)}$ && \begin{tabular}{c}Partial action \\ Not simple  \end{tabular}\\
\hline
\end{tabular}
\end{center}

\subsection{Monoid $M_5$: $b$ absorbent, $a^2=b,\ ab=ba=b$}

Cayley table:
$$\begin{array}{c|ccc}&1&a&b \\ \hline 1&1&a&b\\ a&a&b&b\\ b&b&b&b \end{array}$$

For $|X|=1$: 2 classes, both simple.

\begin{center}
\begin{tabular}{|c|ccc|c|}
\hline 
$\begin{array}{|c|c|}\hline
& x\\\hline
1&x\\a&x\\ b&x\\ \hline \end{array}$
&& $\xymatrix{x\ar^{1,a,b}@(ur,dr)}$ && \begin{tabular}{c}Total action \\ Simple: YES  \end{tabular}\\
\hline
$\begin{array}{|c|c|}\hline
& x\\\hline
1&x\\a&\emptyset\\ b&\emptyset\\ \hline \end{array}$
&& $\xymatrix{x\ar^{1}@(ur,dr)}$ && \begin{tabular}{c}Partial action \\ Simple: YES  \end{tabular}\\
\hline
\end{tabular}
\end{center}

For $|X|=2$: out of 81 tables, 3 actions, 2 classes, none simple.

\begin{center}
\begin{tabular}{|c|ccc|c|}
\hline 
$\begin{array}{|c|cc|}\hline
& x&y\\\hline
1&x&y\\a&x&y\\ b&x&y\\ \hline \end{array}$
&& $\xymatrix{x\ar_{1,a,b}@(ul,dl)&y\ar^{1,a,b}@(ur,dr)}$ && \begin{tabular}{c}Total action \\ Not simple  \end{tabular}\\
\hline
$\begin{array}{|c|cc|}\hline
& x&y\\\hline
1&x&y\\a&x&y\\ b&y&y\\ \hline \end{array}$
&& $\xymatrix{x\ar_{1,a}@(ul,dl)\ar^{b}[r]&y\ar^{1,a,b}@(ur,dr)}$ && \begin{tabular}{c}Total action \\ Not simple  \end{tabular}\\
\hline
$\begin{array}{|c|cc|}\hline
& x&y\\\hline
1&x&y\\a&\emptyset&\emptyset\\ b&\emptyset&\emptyset\\ \hline \end{array}$
&& $\xymatrix{x\ar_{1}@(ul,dl)&y\ar^{1}@(ur,dr)}$ && \begin{tabular}{c}Partial action \\ Not simple  \end{tabular}\\
\hline
\end{tabular}
\end{center}

For $|X|=3$: out of 4096 tables, 10 actions, 3 classes, none simple.

\begin{center}
\begin{tabular}{|c|ccc|c|}
\hline 
$\begin{array}{|c|ccc|}\hline
& x&y&z\\\hline
1&x&y&z\\a&x&y&z\\ b&x&y&z\\ \hline \end{array}$
&& $\xymatrix{x\ar_{1,a,b}@(ul,dl)& & y\ar^{1,a,b}@(ur,dr) \\ & z\ar^{1,a,b}@(ul,ur)}$ && \begin{tabular}{c}Total action \\ Not simple  \end{tabular}\\
\hline
$\begin{array}{|c|ccc|}\hline
& x&y&z\\\hline
1&x&y&z\\a&x&z&z\\ b&z&z&z\\ \hline \end{array}$
&& $\xymatrix{x\ar_{1,a}@(ul,dl)\ar^{b}[dr]& & y\ar^{1}@(ur,dr)\ar^{a}[dl] \\ & z\ar_{1,a,b}@(dl,dr)}$ && \begin{tabular}{c}Total action \\ Not simple  \end{tabular}\\
\hline
$\begin{array}{|c|ccc|}\hline
& x&y&z\\\hline
1&x&y&z\\a&\emptyset&\emptyset&\emptyset\\ b&\emptyset&\emptyset&\emptyset\\ \hline \end{array}$
&& $\xymatrix{x\ar_{1}@(ul,dl)& & y\ar^{1}@(ur,dr) \\ & z\ar^{1}@(ul,ur)}$ && \begin{tabular}{c}Partial action \\ Not simple  \end{tabular}\\
\hline
\end{tabular}
\end{center}

\subsection{Monoid $M_6$: $a^2=a,\ b^2=a,\ ab=ba=b$}

Cayley table:
$$\begin{array}{c|ccc}&1&a&b \\ \hline 1&1&a&b\\ a&a&a&b\\ b&b&b&a \end{array}$$

For $|X|=1$: 2 classes, both simple.

\begin{center}
\begin{tabular}{|c|ccc|c|}
\hline 
$\begin{array}{|c|c|}\hline
& x\\\hline
1&x\\a&x\\ b&x\\ \hline \end{array}$
&& $\xymatrix{x\ar^{1,a,b}@(ur,dr)}$ && \begin{tabular}{c}Total action \\ Simple: YES  \end{tabular}\\
\hline
$\begin{array}{|c|c|}\hline
& x\\\hline
1&x\\a&\emptyset\\ b&\emptyset\\ \hline \end{array}$
&& $\xymatrix{x\ar^{1}@(ur,dr)}$ && \begin{tabular}{c}Partial action \\ Simple: YES  \end{tabular}\\
\hline
\end{tabular}
\end{center}

For $|X|=2$: out of 81 tables, 5 actions, 3 classes, 1 simple.

\begin{center}
\begin{tabular}{|c|ccc|c|}
\hline 
$\begin{array}{|c|cc|}\hline
& x&y\\\hline
1&x&y\\a&x&y\\ b&x&y\\ \hline \end{array}$
&& $\xymatrix{x\ar_{1,a,b}@(ul,dl)&y\ar^{1,a,b}@(ur,dr)}$ && \begin{tabular}{c}Total action \\ Not simple  \end{tabular}\\
\hline
$\begin{array}{|c|cc|}\hline
& x&y\\\hline
1&x&y\\a&x&x\\ b&x&x\\ \hline \end{array}$
&& $\xymatrix{x\ar_{1,a,b}@(ul,dl)&y\ar^{1}@(ur,dr)\ar_{a,b}[l]}$ && \begin{tabular}{c}Total action \\ Not simple  \end{tabular}\\
\hline
$\begin{array}{|c|cc|}\hline
& x&y\\\hline
1&x&y\\a&x&y\\ b&y&x\\ \hline \end{array}$
&& $\xymatrix{x\ar_{1,a}@(ul,dl)\ar@/^/^{b}[r]&y\ar^{1,a}@(ur,dr)\ar@/^/^{b}[l]}$ && \begin{tabular}{c}Total action \\ Simple: YES  \end{tabular}\\
\hline
$\begin{array}{|c|cc|}\hline
& x&y\\\hline
1&x&y\\a&\emptyset&\emptyset\\ b&\emptyset&\emptyset\\ \hline \end{array}$
&& $\xymatrix{x\ar_{1}@(ul,dl)&y\ar^{1}@(ur,dr)}$ && \begin{tabular}{c}Partial action \\ Not simple  \end{tabular}\\
\hline
$\begin{array}{|c|cc|}\hline
& x&y\\\hline
1&x&y\\a&x&y\\ b&\emptyset&\emptyset\\ \hline \end{array}$
&& $\xymatrix{x\ar_{1,a}@(ul,dl)&y\ar^{1,a}@(ur,dr)}$ && \begin{tabular}{c}Partial action \\ Not simple  \end{tabular}\\
\hline
\end{tabular}
\end{center}

For $|X|=3$: out of 4096 tables, 18 actions, 5 classes, none simple.

\begin{center}
\begin{tabular}{|c|ccc|c|}
\hline 
$\begin{array}{|c|ccc|}\hline
& x&y&z\\\hline
1&x&y&z\\a&x&y&z\\ b&x&y&z\\ \hline \end{array}$
&& $\xymatrix{x\ar_{1,a,b}@(ul,dl)& & y\ar^{1,a,b}@(ur,dr) \\ & z\ar^{1,a,b}@(ul,ur)}$ && \begin{tabular}{c}Total action \\ Not simple  \end{tabular}\\
\hline
$\begin{array}{|c|ccc|}\hline
& x&y&z\\\hline
1&x&y&z\\a&x&x&x\\ b&x&x&x\\ \hline \end{array}$
&& $\xymatrix{x\ar_{1,a,b}@(ul,dl)& & y\ar^{1}@(ur,dr)\ar_{a,b}[ll] \\ & z\ar^{1}@(ur,dr)\ar_{a,b}[lu]}$ && \begin{tabular}{c}Total action \\ Not simple  \end{tabular}\\
\hline
$\begin{array}{|c|ccc|}\hline
& x&y&z\\\hline
1&x&y&z\\a&x&y&z\\ b&y&x&y\\ \hline \end{array}$
&& $\xymatrix{x\ar_{1,a}@(ul,dl)\ar@/^/^{b}[rr]& & y\ar^{1,a}@(ur,dr)\ar@/^/^{b}[ll] \\ & z\ar^{1,a,b}@(ul,ur)}$ && \begin{tabular}{c}Total action \\ Not simple  \end{tabular}\\
\hline
$\begin{array}{|c|ccc|}\hline
& x&y&z\\\hline
1&x&y&z\\a&x&y&z\\ b&x&z&x\\ \hline \end{array}$
&& $\xymatrix{x\ar_{1,a,b}@(ul,dl)& & y\ar^{1,a}@(ur,dr)\ar^{b}[ld] \\ & z\ar^{1,a}@(ul,ur)\ar_{b}[ru]}$ && \begin{tabular}{c}Total action \\ Not simple  \end{tabular}\\
\hline
$\begin{array}{|c|ccc|}\hline
& x&y&z\\\hline
1&x&y&z\\a&\emptyset&\emptyset&\emptyset\\ b&\emptyset&\emptyset&\emptyset\\ \hline \end{array}$
&& $\xymatrix{x\ar_{1}@(ul,dl)& & y\ar^{1}@(ur,dr) \\ & z\ar^{1}@(ul,ur)}$ && \begin{tabular}{c}Partial action \\ Not simple  \end{tabular}\\
\hline
\end{tabular}
\end{center}

\subsection{Global summary for all monoids}
We condense all the observed information in the following table.
\begin{center}
\begin{tabular}{|c|c|c|c|c|c|}
\hline
\textbf{Monoid} & $|X|$ & \textbf{Actions} & \textbf{Classes} & \textbf{Indecomposable} & \textbf{Simple} \\
\hline
 {$M_1$} & 1 & 2 & 2 & 2 & 2 \\
& 2 & 7 & 5 & 1 & 1 \\
& 3 & 35 & 11 & 0 & 0 \\
\hline
 {$M_2$} & 1 & 3 & 3 & 3 & 3 \\
& 2 & 23 & 13 & 0 & 0 \\
& 3 & 57 & 14 & 0 & 0 \\
\hline
 {$M_3$} & 1 & 2 & 2 & 2 & 2 \\
& 2 & 5 & 3 & 1 & 1 \\
& 3 & 22 & 5 & 0 & 0 \\
\hline
\end{tabular}
\end{center}

\begin{center}
\begin{tabular}{|c|c|c|c|c|c|}
\hline 
\textbf{Monoid} & $|X|$ & \textbf{Actions} & \textbf{Classes} & \textbf{Indecomposable} & \textbf{Simple} \\
\hline
 {$M_4$} & 1 & 2 & 2 & 2 & 2 \\
& 2 & 4 & 3 & 0 & 0 \\
& 3 & 16 & 5 & 0 & 0 \\
\hline
 {$M_5$} & 1 & 2 & 2 & 2 & 2 \\
& 2 & 3 & 2 & 0 & 0 \\
& 3 & 10 & 3 & 0 & 0 \\
\hline
 {$M_6$} & 1 & 2 & 2 & 2 & 2 \\
& 2 & 5 & 3 & 1 & 1 \\
& 3 & 18 & 5 & 0 & 0 \\
\hline
\end{tabular}
\end{center}

\section{General results}

In this section we present three general results that emerged when performing the calculations with small monoids. These lemmas are valid for any finite monoid $M$ and any $M$-set (total or partial).

\begin{lema}
Let $X$ be a non-empty simple $M$-set (total or partial). Then
\[
|X| \leq |M|.
\]
\end{lema}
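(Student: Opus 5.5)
The plan is to show that a simple $M$-set is generated by any one of its elements, and then count. Fix any $x \in X$ (possible since $X \neq \emptyset$) and consider its orbit
\[
M\cdot x = \{\, m\cdot x \;:\; m\in M \text{ and } m\cdot x \text{ is defined} \,\}.
\]
In the total case every $m\cdot x$ is defined. In the partial case we simply restrict to those $m$ for which it is.

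The first step is to check that $M\cdot x$ is an $M$-subset, i.e.\ invariant under the action. Take $y = m\cdot x \in M\cdot x$ and $n\in M$ with $n\cdot y$ defined. Then $n\cdot(m\cdot x)$ is defined. By the compatibility condition in the definition of partial action, $(nm)\cdot x$ is therefore defined and equals $n\cdot y$, so $n\cdot y \in M\cdot x$. In the total case this is just the axiom $(nm)\cdot x = n\cdot(m\cdot x)$.

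The second step uses simplicity. Since $1\cdot x = x$ is always defined, $x\in M\cdot x$, so $M\cdot x$ is a non-empty $M$-subset of $X$. Because $X$ has no non-empty proper $M$-subsets, we get $M\cdot x = X$.

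Finally, let $D_x=\{m\in M : m\cdot x \text{ defined}\}\subseteq M$. The map $D_x\to X$, $m\mapsto m\cdot x$, is surjective by the second step, so
\[
|X| \le |D_x| \le |M|.
\]
The only point needing any care is the invariance check in the partial setting. There one must use the ``if and only if'' in the compatibility axiom, which guarantees that definedness of $n\cdot(m\cdot x)$ implies definedness of $(nm)\cdot x$; the rest is immediate.
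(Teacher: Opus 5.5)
Your proposal is correct and follows the paper's proof: the orbit of a point is a non-empty invariant subset (using the ``if and only if'' compatibility axiom in the partial case), so simplicity forces it to equal $X$, and the orbit map is then surjective onto $X$. Your explicit domain $D_x \subseteq M$ is just a slightly tidier way of stating the paper's ``surjective where the action is defined.''
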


\begin{proof}
Fix an element $x_0 \in X$ (if the action is partial, take an element where at least the identity is defined; the identity is always defined). Consider the \emph{orbit} of $x_0$ under the action:
\[
\mathcal{O}(x_0) := \{ m \cdot x_0 \mid m \in M \text{ and } m\cdot x_0 \text{ is defined} \}.
\]
This orbit is invariant under the action of $M$: if $y = m\cdot x_0$ is defined and $n\in M$ is such that $n\cdot y$ is defined, then by the compatibility of the action we have
\[
n\cdot y = n\cdot (m\cdot x_0) = (nm)\cdot x_0 \in \mathcal{O}(x_0).
\]
Since $X$ is simple and $\mathcal{O}(x_0)$ is non-empty (it contains $1\cdot x_0 = x_0$), we necessarily have $\mathcal{O}(x_0) = X$. Therefore, the map
\[
\phi: M \longrightarrow X, \qquad \phi(m) = m\cdot x_0
\]
is surjective (in the sense that every element of $X$ is reached by some $m$ where the action is defined). Hence $|X| \leq |M|$.
\end{proof}

\begin{lema}
Let $\Omega$ be a total $M$-set and let $\Lambda \subseteq \Omega$ be a sub-$M$-set (partial). Then $\Lambda$ is necessarily total.
\end{lema}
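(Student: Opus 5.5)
The statement is essentially a matter of unwinding what ``sub-$M$-set'' means. I read it as follows: a subset $\Lambda\subseteq\Omega$ that is invariant under the action of $M$, with the structure on $\Lambda$ being the one inherited from $\Omega$. Concretely, for $m\in M$ and $\lambda\in\Lambda$, the product $m\cdot_\Lambda\lambda$ is declared defined exactly when $m\cdot\lambda$ is defined in $\Omega$ and lies in $\Lambda$, and then it equals $m\cdot\lambda$. The first step is to fix this convention explicitly. Allowing $\Lambda$ to carry a \emph{partial} structure amounts to allowing restriction to partial functions; the content of the lemma is that, under invariance, no genuine partiality can appear.

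The second step is the core observation. Since $\Omega$ is total, for every $m\in M$ and every $\lambda\in\Lambda\subseteq\Omega$ the element $m\cdot\lambda$ is defined in $\Omega$. Invariance of $\Lambda$ (the definition of $M$-subset in the Preliminaries) gives $m\cdot\lambda\in\Lambda$. Hence the restricted operation $M\times\Lambda\to\Lambda$ is everywhere defined. The axioms $1\cdot\lambda=\lambda$ and $(mn)\cdot\lambda=m\cdot(n\cdot\lambda)$ are inherited verbatim from $\Omega$. So $\Lambda$ is a total $M$-set.

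The only real obstacle is the interpretive one. One might want to admit a ``sub-action'' in which some $m\cdot\lambda$ is simply left undefined on $\Lambda$, even though it is defined in $\Omega$. I would address this in two ways. First, I would argue that invariance is the defining property of a sub-$M$-set, which is exactly the paper's notion of $M$-subset. Second, I would note that if a partial structure on $\Lambda$ is also required to make the inclusion $\Lambda\hookrightarrow\Omega$ a morphism in the sense of the Preliminaries, then for each $m$ and $\lambda$ we need $m\cdot\iota(\lambda)=\iota(m\cdot\lambda)$ ``in the partial sense''. The left side is always defined because $\Omega$ is total, so the right side must be defined too. This forces $m\cdot\lambda$ to be defined in $\Lambda$, giving totality again. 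With this morphism argument the proof is complete under either reading.
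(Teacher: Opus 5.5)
Your proposal is correct and essentially matches the paper: the paper's proof is exactly your second argument, namely that the inclusion $\iota:\Lambda\hookrightarrow\Omega$ is a morphism with equality in the strong partial sense (both sides defined and equal, or both undefined), and this forces $m\cdot_\Lambda\lambda$ to be defined because $m\cdot_\Omega\lambda$ always is. Your first argument, via invariance and the inherited structure, is an equally valid unwinding that fits the Preliminaries' definition of $M$-subset; it also has the advantage of not relying on the strong reading of ``morphism in the partial sense'', since under the laxer convention (where $m\cdot x$ being defined only forces $m\cdot f(x)$ to be defined and equal to $f(m\cdot x)$) the morphism argument alone would not force totality.
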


\begin{proof}
Let $\lambda \in \Lambda$ and $m\in M$. Since $\Omega$ is total, $\lambda \cdot_\Omega m$ is defined and belongs to $\Omega$. The condition that $\Lambda$ is a sub-$M$-set (i.e., that the inclusion $\iota: \Lambda \hookrightarrow \Omega$ is a morphism of partial $M$-sets) requires that
\[
\iota(\lambda \cdot_\Lambda m) = \iota(\lambda) \cdot_\Omega m = \lambda \cdot_\Omega m.
\]
The right-hand side is defined, so the left-hand side must also be defined. Since $\iota$ is the inclusion, we have $\iota(\lambda \cdot_\Lambda m) = \lambda \cdot_\Lambda m$. Therefore, $\lambda \cdot_\Lambda m$ is defined for all $\lambda \in \Lambda$ and $m\in M$, proving that $\Lambda$ is total.
\end{proof}

\begin{lema}
Let $\Omega$ be a non-empty partial $M$-set. Then $\Omega$ is simple if and only if $\Omega$ is indecomposable.
\end{lema}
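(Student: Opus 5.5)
The direction ``simple $\Rightarrow$ indecomposable'' is immediate from the definitions. If $\Omega = A \sqcup B$ with $A,B$ non-empty proper $M$-subsets, then $A$ is itself a non-empty proper $M$-subset, so $\Omega$ is not simple. The whole content of the lemma is therefore the converse. I would prove it in contrapositive form: if $\Omega$ has a non-empty proper $M$-subset $A$, then $\Omega$ decomposes. The argument must use partiality, since the paper already exhibits total $M$-sets that are indecomposable but not simple. An example is the order-two monoid acting on $\{x,y\}$ by $a\cdot x = y$.

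Since the paper defines an $M$-subset only as an invariant subset, and the tables in Sections~3 and~4 mark as decomposable some partial actions with arrows from one part to the other, I will read a decomposition of a partial $M$-set $\Omega$ as follows: $\Omega = A \sqcup B$, where each piece carries a partial $M$-action, namely the restriction of the one on $\Omega$, defined exactly when the result stays in the piece. With that reading, take $A$ to be the given non-empty proper invariant subset and $B = \Omega\setminus A$.

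On $A$ nothing changes: $A$ is invariant, so $A$ is a partial $M$-set with the inherited action. On $B$, define $m\ast b := m\cdot b$ when $m\cdot b$ is defined and lies in $B$, and leave it undefined otherwise. Then $1\ast b = b$ holds trivially.

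The key step is the compatibility axiom for $\ast$: $m\ast(n\ast b)$ is defined iff $(mn)\ast b$ is defined, and then they agree. I would check the two directions separately, using invariance of $A$ and compatibility in $\Omega$.
\begin{itemize}
\item \emph{From left to right.} If $n\cdot b\in B$ and $m\cdot(n\cdot b)\in B$, then $(mn)\cdot b = m\cdot(n\cdot b)\in B$.
\item \emph{From right to left.} If $(mn)\cdot b$ is defined and lies in $B$, then by compatibility in $\Omega$ the element $n\cdot b$ is defined and $m\cdot(n\cdot b)=(mn)\cdot b$. Suppose $n\cdot b$ were in $A$. Then invariance of $A$ would force $(mn)\cdot b\in A$, a contradiction. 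Hence $n\cdot b\in B$, and $m\ast(n\ast b)$ is defined and equals $(mn)\ast b$.
\end{itemize}
Thus $B$ is a non-empty partial $M$-set, and $\Omega = A\sqcup B$ is a decomposition, so $\Omega$ is not indecomposable.

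The main obstacle is not this computation but pinning down the notion of decomposition so that the statement is actually true for partial actions. Under the stricter reading, in which both pieces must be invariant for the original action, the converse fails. For the order-two monoid, $a\cdot x = x$ with $a\cdot y$ undefined gives $\{x\}$ and $\{y\}$ as invariant pieces, whereas the tables of Section~4 also count as decomposable actions in which $B$ has arrows into $A$. So in writing the proof I would first state explicitly that decompositions of partial $M$-sets are taken with restricted actions, which is exactly what a partial action allows, since one may simply declare $m\cdot b$ undefined. The restriction-to-$B$ argument above then carries the proof.
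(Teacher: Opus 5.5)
The gap is in $(\Rightarrow)$. You argue that direction for splittings into two \emph{invariant} pieces. The notion of decomposition you then adopt only requires $\Omega=A\sqcup B$ with both restricted actions satisfying the partial-action axioms. Under that notion the pieces need not be invariant, so ``$A$ is itself a non-empty proper $M$-subset'' no longer follows.

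Your notion makes more sets decomposable, so the forward direction becomes a real claim: a simple $\Omega$ admits no such partition. It is short to prove. In a simple $\Omega$ every orbit $\{m\cdot u\}$ is a non-empty invariant subset, hence equals $\Omega$. Given $a\in A$, pick $n$ with $n\cdot a\in B$, and then $m$ with $m\cdot(n\cdot a)\in A$; both exist since $A,B\neq\emptyset$. Compatibility in $\Omega$ gives $(mn)\cdot a=m\cdot(n\cdot a)\in A$. So $(mn)\ast a$ is defined in $A$ while $n\ast a$ is not, and the restriction to $A$ violates the compatibility axiom. Alternatively, require in your definition that at least one piece be invariant, which your construction guarantees anyway; then your one-line argument suffices.

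Apart from this, your $(\Leftarrow)$ argument is the paper's: restrict the action to $\Omega\setminus\Lambda$, declaring $m\cdot b$ undefined when it leaves the complement. Your two-sided compatibility check is exactly what the paper calls ``immediate.'' Your caution about what decomposition means is justified. The paper concludes $\Omega\cong\Lambda\sqcup(\Omega\setminus\Lambda)$, which fails whenever some $m\cdot b$ with $b\notin\Lambda$ lands in $\Lambda$, because the disjoint union then has strictly fewer defined pairs $(m,u)$. So the paper is tacitly using a looser notion like yours.

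One small correction: the order-two example you give when asserting that the converse fails under the strict reading is strictly decomposable and non-simple, so it does not show this. Your earlier total example $a\cdot x=a\cdot y=y$ does show it. So does the genuinely partial action of $\{1,a,0\}$ on $\{x,y\}$ with $a\cdot x=a\cdot y=y$ and $0$ nowhere defined.
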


\begin{proof}
$(\Rightarrow)$ If $\Omega$ is simple, it has no non-empty proper $M$-subsets, so it cannot be expressed as a disjoint union of two non-empty $M$-subsets. Hence it is indecomposable.

$(\Leftarrow)$ Suppose $\Omega$ is indecomposable. If it were not simple, there would exist a non-empty proper $M$-subset $\Lambda \subset \Omega$. Consider its complement $X \setminus \Lambda$. The action of $M$ restricts to this complement as follows: for $z \in \Omega \setminus \Lambda$ and $m\in M$, define
\[
z \cdot_{(\Omega \setminus \Lambda)} m =
\begin{cases}
z \cdot_\Omega m & \text{if } z \cdot_\Omega m \text{ is defined and belongs to } \Omega \setminus \Lambda,\\
\text{undefined} & \text{otherwise.}
\end{cases}
\]
It is immediate to verify that this defines a partial $M$-set structure on $\Omega \setminus \Lambda$. Moreover, $\Omega$ is the external disjoint union of $\Lambda$ and $\Omega \setminus \Lambda$ as $M$-sets, i.e.,
\[
\Omega \cong \Lambda \sqcup (\Omega \setminus \Lambda),
\]
which contradicts the indecomposability of $\Omega$. Therefore, $\Omega$ is simple.
\end{proof}

\subsection*{Final remark}

An important consequence of Lemma 2 is that \emph{no genuine partial $M$-set (i.e., one that is not total) can be a sub-$M$-set of a total one}. In categorical terms, the inclusion of the category of total actions into that of partial actions preserves simplicity, but has no inverse. The simple objects that appear in the partial world and not in the total one are precisely those we have identified in the previous sections: the partial unitary action on a point, and the exchange action on the monoid $M_1$ on two elements.

The purpose of these notes is to document the computational classification of partial and total actions of the non-group monoids of order 2 and 3, and to point out some structural phenomena that emerge naturally from this analysis. We emphasize that these are observations derived from an exploratory process; although proofs are provided for the stated results, possible improvements in formalization and generalization remain open. Nevertheless, we consider that this work lays the foundations for a solid line of research, both in the study of larger monoids and in the connection with the theory of Burnside rings and $C$-sets.

\section*{Acknowledgements}

\section*{Declarations}

 \begin{bf}Artificial intelligence assistance declaration\end{bf}\\
During the preparation of this manuscript, the authors used DeepSeek-V4-Flash for language editing, grammar checking, and assistance in the formulation and refinement of the exposition. After using this tool, the authors reviewed, verified, and edited the content as necessary, and take full responsibility for the final content of this publication.\\

\indent \begin{bf}\hspace{0.25in}Ethical approval\end{bf}\\
Not applicable.\\

\begin{bf}Conflict of interest\end{bf}\\
The author declares that there are no financial conflicts of interest or known personal relationships that could have influenced the work reported in this article.\\

\begin{bf}Author contributions\end{bf}\\
N/A\\

\begin{bf}Funding\end{bf}\\
This research has been supported by the Centro de Ciencias Matemáticas, UNAM Campus Morelia, which has provided the necessary academic environment for the author to carry out his postdoctoral stay. Additionally, the research has been funded by SECIHTI, Postdoctoral Fellowship No. I1200/111/2024, whose support has been fundamental to carry out these research activities. \\

\begin{bf}Data availability\end{bf}\\
Not applicable.

 \bibliographystyle{plain}
\bibliography{bibliografia}

\end{document}